\theoremstyle{plain}
\newtheorem{theorem}{Theorem}[section]
\newtheorem{lemma}{Lemma}[section]
\theoremstyle{definition}
\begin{document}

\title[Farthest point problem and M-compact sets]{Farthest point problem and M-compact sets}
\author[Debmalya Sain, Kallol Paul \and Anubhab Ray]{Debmalya Sain, Kallol Paul \and Anubhab Ray}

\newcommand{\acr}{\newline\indent}

\address{\llap{\,}Department of Mathematics\acr
                              Jadavpur University\acr
                              Kolkata 700032\acr
                              West Bengal\acr
                              INDIA}
\email{saindebmalya@gmail.com; kalloldada@gmail.com; anubhab.jumath@gmail.com}

\thanks{First author would like to thank UGC, Govt. of India and third author would like to thank DST, Govt. of India  for the financial support.} 

\subjclass[2010]{Primary 46B20, Secondary 46B99}
\keywords{Uniquely remotal ; M-compact; strictly convex}

\begin{abstract}
In this paper we give an elementary proof of the fact that every uniquely remotal set is singleton  in a finite dimensional strictly convex normed linear space. We show that if A is a uniquely remotal M-compact subset with $ A'\neq \emptyset $ then $A'$ is M-compact and uniquely remotal. We also show that if A is a uniquely remotal M-compact set and $A'$ is compact then A is singleton. 

\end{abstract}

\maketitle

\section{Introduction.}
Let $ (\mathbb{X},\| .\|) $ be a normed linear space and $ A $ be a nonempty bounded subset of $ \mathbb{X} $. For any $ x \in \mathbb{X},$  the farthest distance of $x$ from the set $A$ is denoted by $ D(x,A) $ i.e.,
\[ D(x,A) = \sup \limits_{a \in A}  \|x-a\| .\]
The farthest distance of $x$ from $A$ may or may not be attained by some element of $A,$ if the distance is attained then the collection of all such points of $A$ is denoted by  $ F(x,A) $ i.e., 
\[ F(x,A) = \{ a \in A : \|x-a\| = D(x,A)\} .\]
The collection of all points in $A$ for which the farthest distance  of $x$ from $A$ is attained for some $ x \in \mathbb{X}$ is denoted by $Far(A) $ i.e.,
   \[ Far{A} = \{ a \in A : ~~a \in F(x,A)  ~~\mbox{for some}~~ x \in \mathbb{X} \} .\]
		
We say that $ A $ is remotal  if $ F(x,A) $ is non-empty  for each $ x \in \mathbb{X} $ and $A$ is said to be uniquely remotal if $F(x,A)$ is singleton for each $x \in \mathbb{X}.$ \\

 The Farthest Point Problem (FPP) can now be stated as follows :\\

  \textbf{Must every uniquely remotal set in a Banach space be singleton?} \\

The FPP was proposed by Motzkin, Starus and Valentine in \cite{MSV}, in context of the Euclidean space $ E^{n}.$ The problem was considered in the setting of Banach spaces by Klee in \cite{K}, where he proved that every compact uniquely remotal subset of a Banach space is a singleton. In \cite{A}, Asplund solved the FPP in the affirmation in any finite dimensional Banach space with respect to a norm which is not necessarily symmetric. The method employed by the Asplund to prove this profound result involves convexity calculus and is quite astounding. In this paper we wish to give a simple  proof of the fact that in finite dimensional strictly convex Banach spaces every uniquely remotal set is a singleton.

The notion of M-compactness in normed linear spaces plays a significant role in the study of the FPP. $ A \subseteq X $ is said to be M-compact if every maximizing sequence in $ A $ is compact. A sequence $ \{a_n\} $ in $ A $ is said to be maximizing if for some $ x \in \mathbb{X} $, $ \|x-a_n\| \rightarrow D(x,A) $. It is easy to observe that M-compactness is a proper generalization of the usual compactness and M-compact sets may not be closed in a Banach space. M-compact sets in Banach spaces were considered by Vlasov in \cite{V}. Panda and Kapoor utilized the concept of M-compactness in the study of FPP in \cite{PK}.

In this paper we aim to further explore the importance of the notion of M-compactness in the study of the FPP. In fact we prove several results to illustrate the vital role played by the notion of M-compactness in determining the metric geometry of the underlying normed linear space in the context of the FPP. It is easy to see that \cite{PK} if $ A $ is M-compact then $ \overline{A} $ is M-compact but converse is not true. In this paper we show that if in addition $ A $ is uniquely remotal then converse holds in strictly convex normed linear spaces i.e., if $ \overline{A} $ is M-compact then $A$ is M-compact in a strictly convex normed linear space. In this paper, we prove that in a normed linear space $ \mathbb{X}, $ if $ A $ is uniquely remotal and M-compact subset of $ \mathbb{X}, $ then the derived set $ A^{'}, $ is either empty or uniquely remotal and M-compact. Panda and Kapoor proved in \cite{PK} that in a normed linear space admitting centers, every uniquely remotal M-compact set is a singleton. Here we proved that if $ A $ is a uniquely remotal and M-compact subset of a Banach space $ \mathbb{X} $ and $ A^{'} $ is compact then $ A $ is a singleton.

\section{ Main Results.}

In this section we first show that in a strictly convex normed linear space, closure of a uniquely remotal set is uniquely remotal and use it to give an easy proof of the fact that every uniquely remotal set in a finite dimensional strictly convex normed linear space is singleton. It is still unknown whether closure of a uniquely remotal set in any normed linear space is uniquely remotal.

\begin{theorem}\label{theorem:closure}
Let $ X $ be a strictly convex normed linear space. If $ A \subseteq X $ is uniquely remotal then $ \overline{A} $ is also uniquely remotal.
\end{theorem}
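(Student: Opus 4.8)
The plan is to fix an arbitrary $x \in X$ and show that $F(x,\overline{A})$ is a singleton. First I would record the elementary fact that $D(x,\overline{A}) = D(x,A)$: since $A \subseteq \overline{A}$ one inequality is clear, and the reverse follows because $a \mapsto \|x-a\|$ is continuous, so the supremum over $\overline{A}$ cannot exceed the supremum over the dense subset $A$. Writing $\{a_x\} = F(x,A)$ and $r = D(x,A)$, the point $a_x$ lies in $\overline{A}$ and satisfies $\|x-a_x\| = r = D(x,\overline{A})$, so $a_x \in F(x,\overline{A})$ and the set is nonempty. The whole problem therefore reduces to showing that every $\bar{a} \in F(x,\overline{A})$ must equal $a_x$. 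The trivial case $r = 0$ (where $A$, and hence $\overline{A}$, is the single point $\{x\}$) can be dismissed at once, so I assume $r > 0$.

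The key device is to push the point $x$ radially outward along the ray emanating from $\bar{a}$ through $x$. Given $\bar{a} \in F(x,\overline{A})$, I set $z = x + \lambda(x - \bar{a})$ for a fixed $\lambda > 0$. A direct computation gives $\|z - \bar{a}\| = (1+\lambda)r$, while the triangle inequality yields $\|z - a\| \le \|x-a\| + \lambda\|x-\bar{a}\| \le (1+\lambda)r$ for every $a \in A$; combined with $D(z,A) = D(z,\overline{A})$ this forces $D(z,A) = (1+\lambda)r$. Now invoke unique remotality of $A$ at $z$: the unique farthest point $a_z$ satisfies $\|z-a_z\| = (1+\lambda)r$. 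Decomposing $z - a_z = (x-a_z) + \lambda(x-\bar{a})$ and running the triangle inequality backwards squeezes $\|x-a_z\| = r$, so $a_z \in F(x,A) = \{a_x\}$, i.e.\ $a_z = a_x$; simultaneously the inequality $\|(x-a_x) + \lambda(x-\bar{a})\| \le \|x-a_x\| + \|\lambda(x-\bar{a})\|$ is forced to be an equality.

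Strict convexity is exactly what converts this equality into the desired conclusion. Since both $x - a_x$ and $\lambda(x-\bar{a})$ are nonzero (here $r>0$ is used) and their norms add to the norm of their sum, strict convexity gives $x - a_x = t(x-\bar{a})$ for some $t > 0$; comparing norms ($r = tr$) yields $t = 1$ and hence $\bar{a} = a_x$. This shows $F(x,\overline{A}) = \{a_x\}$ for every $x$, which is precisely unique remotality of $\overline{A}$.

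I expect the only genuinely delicate step to be the radial-extension construction together with the accompanying triangle-inequality squeeze that pins $a_z$ down to $a_x$; once that is in place, strict convexity finishes things mechanically. A secondary point requiring care is ensuring the vectors fed into the strict convexity criterion are both nonzero, which is why the case $r = 0$ is separated out at the start.
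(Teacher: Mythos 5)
Your proof is correct, and while it is built on the same geometric device as the paper's proof --- pushing the base point radially outward away from the putative farthest point of $\overline{A}$ (your $z = x + \lambda(x-\bar{a})$ is exactly the paper's $z_0 = (1-t_0)w_0 + t_0x_0$ with $\lambda = t_0 - 1$) --- the way you extract the conclusion is genuinely different and cleaner. The paper argues by contradiction: it proves the \emph{strict} inequality $\|z_0 - a\| < \|z_0 - w_0\|$ for every $a \in A$, which forces a case split, since the strict-convexity form it uses (strict triangle inequality for linearly \emph{independent} vectors) is unavailable when $z_0$, $x_0$, $a_{x_0}$ are collinear; that degenerate case must be disposed of by a separate computation (showing $w_0 = 2x_0 - a_{x_0}$), and it is what forces the restriction $t_0 < 2$ on the paper's parameter. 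You instead compute $D(z,A) = (1+\lambda)r$ exactly by a two-sided sandwich, use unique remotality \emph{at the new point} $z$ to produce $a_z$, squeeze $\|x - a_z\| = r$ to identify $a_z = a_x$, and only then invoke strict convexity in its equality-case form (equality in the triangle inequality for nonzero vectors forces positive proportionality). Since proportionality plus equal norms yields $\bar{a} = a_x$ regardless of any collinearity, your argument needs no case analysis, works for any $\lambda > 0$, and in fact delivers the slightly sharper conclusion $F(x,\overline{A}) = F(x,A)$ for every $x$, rather than just non-uniqueness being contradictory. The one point you rightly flag --- separating out $r = 0$ so that both vectors fed to strict convexity are nonzero --- is handled correctly.
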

\begin{proof} As $ A $ is uniquely remotal and we know that, for any $ x \in X $, $ D(x,A) = D(x, \overline{A}) $, so for any $ x \in X $, there exists $ a \in A $ such that $ \|x-a\| = D(x,\overline{A}) $ and $ a \in A \subseteq \overline{A} $. So, $ \overline{A} $  is remotal. Suppose, $ \overline{A} $ is not uniquely remotal, then there exists $ x_0 \in X $ such that, $ F(x_0, \overline{A}) $ is not singleton, i.e. there exists $ w_0 \in \overline{A} \setminus{A} $ and unique $ a_{x_0} \in A $ such that $ \|x_0-w_0\| = \sup\limits_{a \in \overline{A}}\|x_0-a\| = \sup\limits_{a'\in A}\|x_0-a'\| = \|x_0-a_{x_0}\|$. Now, choose $ 1<t_0<2 $ and fixed after choice. Consider $ z_0 = (1-t_0)w_0 + t_0x_0 $. 
Now, for any $ a \in A \setminus\{a_{x_0}\} $,
\begin{eqnarray*}
        \|z_0-a\| & = & \|z_0-x_0 + x_0 - a\| \\
				          & \leq & \|z_0-x_0\| + \|x_0-a\| \\
									& = & \|(1-t_0)w_0-t_0x_0-x_0\| + \|x_0-a\| \\
									& < & (t_0-1)\|x_0-w_0\| + \|x_0-w_0\| \\
									& = & t_0\|x_0-w_0\| \\
									& = & \|z_0-w_0\| \ldots \ldots (1) 
\end{eqnarray*}
\noindent Now, if $ z_0 $, $ x_0 $ and $ a_{x_0} $ are non co-linear then $ (z_0-x_0) $ and $ (x_0-a_{x_0}) $ are linearly independent. 
Then, $ \|z_0-a_{x_0}\| = \|z_0-x_0+x_0-a_{x_0}\| < \|z_0-x_0\| + \|x_0-a_{x_0}\| $ ( as $ X $ is strictly convex space ). 
So, $ \|z_0-a_{x_0}\| < (t_0-1)\|x_0-w_0\| + \|x_0-w_0\| = t_0\|x_0-w_0\| = \|z_0-w_0\| \ldots \ldots (2) $ \\
Suppose, $z_0$, $x_0$ and $a_{x_0}$ are co-linear. 
As, $ z_0 = (1-t_0)w_0 + t_0x_0 $, so, $x_0$, $a_{x_0}$ and $w_0$ are co-linear and we know $ \|x_0-w_0\| = \|x_0 -a_{x_0}\| $. So, we must have $ w_0 = 2x_0-a_{x_0} $. 
Now,
\begin{eqnarray*}
    \|x_0-z_0\| + \|z_0-a_{x_0}\| & = & \|x_0-(1-t_0)w_0-t_0x_0\| + \|(1-t_0)w_0 + t_0x_0-a_{x_0}\| \\
		                              & = & (t_0-1)\|x_0-w_0\| + \|t_0x_0 + (1-t_0)(2x_0-a_{x_0})-a_{x_0}\| \\
																	& = & (t_0-1)\|x_0-w_0\| + (2-t_0)\|x_0-a_{x_0}\| \\
																	& = & (t_0-1)\|x_0-a_{x_0}\| + (2-t_0)\|x_0-a_{x_0}\| \\
																	& = & \|x_0-a_{x_0}\| \\
																	& = & \|x_0-w_0\| 
\end{eqnarray*}
\noindent So, $ \|x_0-w_0\| = \|x_0-z_0\| + \|z_0-a_{x_0}\| > \|z_0-a_{x_0}\| $    [ as $ x_0 \neq z_0 $ ] 
And we have, $ \|z_0-w_0\| = t_0\|x_0-w_0\| > \|x_0-w_0\| $    [ as $ t_0 > 1 $ ] 
From the above two inequality we have $ \|z_0-w_0\| > \|x_0-w_0\| > \|z_0-a_{x_0}\| \ldots \ldots (3)$.\\ 
So, from $ (1) $, $ (2) $ and $ (3) $ we have $ \|z_0-w_0\| > \|z_0-a\| $ for all $ a \in A $. 
As $ A $ is uniquely remotal so there exists $ a_{z_0} \in A $ such that $ \|z_0-w_0\| > \|z_0-a_{z_0}\| $, which contradicts the fact that $ D(z_0, \overline{A}) = D(z_0,A) $. 
So, our first assumption is wrong and $ \overline{A} $ is uniquely remotal. 
\end{proof}
\begin{theorem}
Let $ X $ be a finite dimensional strictly convex normed linear space. If $ A\subseteq X $ is uniquely remotal then $ A $ is singleton.
\end{theorem}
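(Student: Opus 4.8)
The plan is to reduce the statement to the already-known compact case by passing to the closure, using finite dimensionality to upgrade closedness and boundedness into genuine compactness. Since $A$ is uniquely remotal it is in particular nonempty and bounded, so in a finite dimensional normed linear space $\overline{A}$ is closed and bounded, hence compact. This is precisely the feature that finite dimensionality buys us, and it is what lets us bring in the classical theory.

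First I would invoke Theorem \ref{theorem:closure}: because $X$ is strictly convex and $A$ is uniquely remotal, its closure $\overline{A}$ is again uniquely remotal. Thus $\overline{A}$ is a uniquely remotal set which is now also compact. Next I would appeal to Klee's theorem, cited in the introduction, which asserts that every compact uniquely remotal subset of a Banach space is a singleton. A finite dimensional normed linear space is complete, hence Banach, so the hypotheses are satisfied and $\overline{A}$ must be a singleton. Finally, since $A$ is a nonempty subset of $\overline{A}$, it follows that $A$ itself is a singleton.

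I do not expect a genuine obstacle in this argument, since the entire difficulty has been front-loaded into Theorem \ref{theorem:closure}. The only points demanding care are the legitimacy of replacing $A$ by $\overline{A}$, that is, the transfer of unique remotality to the closure (supplied by Theorem \ref{theorem:closure}) together with the elementary fact that the farthest-distance functional is unchanged under closure, $D(x,A)=D(x,\overline{A})$ for every $x\in X$, and the verification that the finite dimensional hypothesis is exactly what makes $\overline{A}$ compact so that Klee's theorem applies.
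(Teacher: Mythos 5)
Your proposal is correct and follows exactly the paper's own argument: apply Theorem \ref{theorem:closure} to get unique remotality of $\overline{A}$, use finite dimensionality (closed and bounded implies compact) to make $\overline{A}$ compact, invoke Klee's theorem to conclude $\overline{A}$ is a singleton, and hence so is $A$. The only difference is that you spell out the boundedness and completeness details that the paper leaves implicit.
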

\begin{proof} By Theorem~\ref{theorem:closure}, $ \overline{A} $ is uniquely remotal. As $ X $ is finite dimensional, $ \overline{A} $ is compact. Then by Klee's result in \cite{K}, $ \overline{A} $ is a singleton, which implies that $ A $ is singleton. 
\end{proof}

In a normed linear space the closure of a M-compact set $A$ is M-compact but the converse is not true. Next we  show that the converse holds if the space is strictly convex and the set $A$ is uniquely remotal.

\begin{theorem}
Let $ X $ be a strictly convex normed linear space. If $ A \subseteq X $ be uniquely remotal and $ \overline{A} $ is M-compact then $ A $ is M-compact.
\end{theorem}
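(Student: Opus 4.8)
The plan is to unwind the definition of M-compactness directly: I would take an arbitrary maximizing sequence in $A$ and produce a subsequence converging to a point of $A$, leaning on the M-compactness of $\overline{A}$ together with Theorem~\ref{theorem:closure}.

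First I would fix a maximizing sequence $\{a_n\}$ in $A$, so that $\|x-a_n\| \to D(x,A)$ for some $x \in X$. Since the farthest distance is insensitive to taking closures, $D(x,A) = D(x,\overline{A})$, and therefore $\{a_n\}$ is equally a maximizing sequence for $\overline{A}$. Because $\overline{A}$ is assumed to be M-compact, this sequence admits a subsequence $\{a_{n_k}\}$ converging to some $w \in \overline{A}$; continuity of the norm then gives $\|x-w\| = D(x,A) = D(x,\overline{A})$, so that $w \in F(x,\overline{A})$.

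The key step, which I expect to carry the real content, is to upgrade $w \in \overline{A}$ to $w \in A$. This is exactly where strict convexity is needed, and it enters through Theorem~\ref{theorem:closure}: since $A$ is uniquely remotal in the strictly convex space $X$, that theorem tells us $\overline{A}$ is uniquely remotal, whence $F(x,\overline{A})$ is a singleton. But unique remotality of $A$ already furnishes a point $a_x \in A$ with $\|x-a_x\| = D(x,A) = D(x,\overline{A})$, so $a_x$ also lies in $F(x,\overline{A})$. Since that set has exactly one element, $w = a_x \in A$.

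Assembling these observations, the subsequence $\{a_{n_k}\}$ of the original maximizing sequence converges to $w = a_x \in A$; hence every maximizing sequence in $A$ has a subsequence converging within $A$, which is precisely the M-compactness of $A$. The only genuine obstacle is the identification $w = a_x$, and without strict convexity (hence without Theorem~\ref{theorem:closure}) there would be no reason for the limit of a maximizing sequence to fall back inside $A$ rather than merely in $\overline{A}\setminus A$.
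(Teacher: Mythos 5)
Your proposal is correct and follows essentially the same route as the paper: both arguments pass the maximizing sequence to $\overline{A}$, invoke M-compactness of $\overline{A}$ to extract a convergent subsequence, and then use Theorem~\ref{theorem:closure} (unique remotality of $\overline{A}$) together with the farthest point $a_x \in A$ to force the limit to lie in $A$. The only cosmetic difference is that you argue directly by identifying the limit with $a_x$, whereas the paper phrases the same step as a contradiction assuming the limit falls in $\overline{A}\setminus A$.
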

\begin{proof} Since $ X $ is a strictly convex normed linear space and $ A\subseteq X $ is uniquely remotal, $ \overline{A} $ is uniquely remotal, by Theorem~\ref{theorem:closure}. \\
Suppose $ A $ is not M-compact then there exists a maximizing sequence $ \{x_n\} \subseteq A $ such that $ \{x_n\} $ has no convergent subsequence. Since $ \{x_n\} $ is maximizing sequence in $ A $, there exists $ x_0 \in X $ such that $ \lim \limits_{n \to \infty} \|x_n-x_0\| = D(x_0, A).$ 
Now, $ D(x_0, A) = D(x_0, \overline{A}), $ so $ \{x_n\} $ is maximizing sequence for $ x_0 $ in $ \overline{A} $. Since $ \overline{A} $ is M-compact, $ \{x_n\} $ has a convergent subsequence $ \{x_{n_k}\} $ in $ \overline{A} $. \\
Suppose $ x_{n_k} \rightarrow w_0 \in \overline{A} \setminus A $ then $ \|x_0-w_0\| = \lim \limits_{k \to \infty}\|x_0-x_{n_k}\| = D(x_0, \overline{A}) $. Since $ A $ is uniquely remotal, there exists $ a_{x_0} \in A $ such that $ \|x_0-a_{x_0}\| = D(x_0, A). $ So we have $ \|x_0-a_{x_0}\| = D(x_0, A) = D(x_0, \overline{A}) = \|x_0-w_0\|, $ which contradicts the fact that $ \overline{A} $ is uniquely remotal. Thus $ A $ is M-compact. 
\end{proof}
We next show that if $ A$ is a uniquely remotal M-compact subset such that $ Far{A} \not\subseteq A'\ $ then $ A $ is singleton.
\begin{theorem}\label{theorem:singleton}
Let $ X $ be a normed linear space. If $ A \subseteq X $ is uniquely remotal and ${M}$-compact and $ Far{A} \not\subseteq A'\ $ then $ A $ is singleton. 
\end{theorem}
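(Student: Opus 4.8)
The plan is to argue by contradiction: assume $A$ is not a singleton and use the hypothesis $Far(A)\not\subseteq A'$ to produce a single point admitting two distinct farthest points in $A$, contradicting unique remotality.

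First I would extract an isolated farthest point. Choose $a_0\in Far(A)\setminus A'$; since $a_0\in A\setminus A'$ it is an isolated point of $A$, so there is $d>0$ with $\|a-a_0\|\ge d$ for every $a\in A\setminus\{a_0\}$. Because $a_0\in Far(A)$, there is $x_0\in X$ with $\|x_0-a_0\|=D(x_0,A)=:R$, and $R>0$ since $R=0$ would force $A=\{a_0\}$.

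The core device is the ray through $a_0$ and $x_0$. Put $x_s=a_0+s(x_0-a_0)$ and $\phi(s)=D(x_s,A)$ for $s\ge 0$. Two elementary facts drive the argument. First, since $a_0\in A$ we have $\phi(s)\ge\|x_s-a_0\|=sR$ for all $s$, and equality $\phi(s)=sR$ holds if and only if $F(x_s,A)=a_0$ (using unique remotality). Second, a triangle-inequality computation using $x_s-x_0=(s-1)(x_0-a_0)$ shows that for $s\ge 1$ and any $a\in A$, $\|x_s-a\|\le(s-1)R+\|x_0-a\|\le sR=\|x_s-a_0\|$, so $F(x_s,A)=a_0$ for all $s\ge 1$. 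Setting $S=\{s\ge 0:\phi(s)=sR\}$, continuity of $\phi$ makes $S$ closed and $[1,\infty)\subseteq S$. Moreover $\phi$ is $R$-Lipschitz in $s$ and $\phi(0)=D(a_0,A)\ge d>0$, so any $s\in S$ satisfies $sR=\phi(s)\ge\phi(0)-sR\ge d-sR$, whence $s\ge d/(2R)$; thus $s_1:=\inf S\ge d/(2R)>0$, and $s_1\in S$ by closedness.

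I would then read off the contradiction at the threshold $s_1$. Since $s_1\in S$, $F(x_{s_1},A)=a_0$. Pick $s_n\uparrow s_1$ with $s_n\in[0,s_1)$; as $s_n<\inf S$ we have $s_n\notin S$, so $a_{s_n}:=F(x_{s_n},A)\ne a_0$ and therefore $\|a_{s_n}-a_0\|\ge d$. Continuity of $\phi$ together with $\|x_{s_1}-a_{s_n}\|\ge\phi(s_n)-|s_1-s_n|R$ shows $\|x_{s_1}-a_{s_n}\|\to D(x_{s_1},A)$, i.e. $\{a_{s_n}\}$ is a maximizing sequence for $x_{s_1}$. Now M-compactness of $A$ furnishes a subsequence $a_{s_{n_k}}\to a^*\in A$, and $\|x_{s_1}-a^*\|=D(x_{s_1},A)$ gives $a^*\in F(x_{s_1},A)=\{a_0\}$, so $a^*=a_0$; but $\|a^*-a_0\|=\lim_k\|a_{s_{n_k}}-a_0\|\ge d>0$, a contradiction. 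Hence $A$ is a singleton. The step I expect to be most delicate is this last one, and it hinges on M-compactness delivering the limit $a^*$ \emph{inside} $A$ rather than merely in $\overline{A}$: it is precisely because $a^*\in A$ that it becomes a second farthest point of $A$ from $x_{s_1}$, contradicting unique remotality (were the limit only known to lie in $\overline{A}$, the contradiction would evaporate in a space not assumed strictly convex). The secondary point needing care is $s_1>0$, which guarantees the approximating parameters $s_n<s_1$ with $F(x_{s_n},A)\ne a_0$ and hence a genuine jump of the farthest-point map at $s_1$.
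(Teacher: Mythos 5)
Your proof is correct, and it shares the paper's central device: both arguments run along the segment (in your case, the ray) joining the isolated farthest point $a_0$ to its witness, and both study the infimum of the set of parameters at which $a_0$ remains the farthest point ($C$ in the paper, $S$ for you). The mechanism at the threshold, however, is genuinely different. The paper supposes $t_0=\inf C>0$ and applies M-compactness plus isolation \emph{statically} at the threshold point $x_0$: it extracts a uniform gap $\epsilon_0>0$ with $\|x_0-a\|<\|x_0-a_0\|-\epsilon_0$ for all $a\in A\setminus\{a_0\}$, and then an explicit perturbation estimate involving $\mathrm{diam}\,A$ (the function $f(t)=\epsilon_0 t-(1-t)\,\mathrm{diam}\,A$) shows that $a_0$ is still farthest at parameters strictly below $t_0$, contradicting minimality; hence $\inf C=0$ and $A$ is a singleton. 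You never need the gap or $\mathrm{diam}\,A$: you first prove the threshold is positive via the $R$-Lipschitz bound on $\phi$ (the quantitative estimate $s_1\ge d/(2R)$, which has no counterpart in the paper — the paper instead folds the case $\inf C=0$ into a dichotomy), and then you use M-compactness \emph{dynamically}, on the sequence of genuine farthest points $a_{s_n}$ at parameters $s_n\uparrow s_1$, whose limit $a^*\in A$ must equal $a_0$ by unique remotality at $x_{s_1}$ yet sits at distance $\ge d$ from it. Your route also makes explicit two points the paper glosses over: closedness of the parameter set (the paper tacitly uses $t_0\in C$ when it writes $\|x_0-a_0\|=D(x_0,A)$), and the reparameterization needed to see that the paper's $t'$ really produces an element of $C$ below $t_0$ (namely $t't_0<t_0$). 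Finally, your closing remark is exactly right and applies equally to the paper: both proofs need M-compactness to deliver subsequential limits of maximizing sequences \emph{inside} $A$ (not merely in $\overline{A}$), since the contradiction in each case is with unique remotality of $A$ itself; this is the reading of M-compactness the paper uses throughout.
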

\begin{proof} As $ Far{A} \not\subseteq A'\ $. So there exists $a_0 \in Far{A}\setminus{A'}$. As $ A $ is uniquely remotal so there exists $ x \in X $ such that $ \|x-a_0\|=\sup \{\|x-a\|: a\in A\} $.
Consider the set $ C=\{0\leq{t}\leq{1} : a_0 $ is a farthest point for $ (1-t)a_0 + t x \} $ 
Clearly $ C\neq \phi $ as $ 1 \in C $. 

We claim that $ \inf C>0$ otherwise $ a_0 $ is farthest point of itself which implies $ A $ is singleton. 
Suppose $ \inf C=t_0>0 $. consider $ x_0=(1-t_0)a_0+t_0 x $. 
We claim that there exists $ \epsilon_0 > 0 $ such that $ \|x_0-a\| < \|x_0-a_0\|- \epsilon_0 ~~~ \forall ~~ a\in A\setminus\{a_0\} $. 

If not then there exists $ \{a_n\} \subseteq A\setminus\{a_0\} $ such that $ \|x_0-a_n\|\rightarrow \|x_0-a_0\|=D(x_0,A) $. 
So $ \{a_n\} $ is maximizing sequence for $ x_0 $. Since $ A $ is M-compact there exists $ \{a_{n_k}\} \subseteq \{a_n\} $ such that $ a_{n_k} \rightarrow a'. $  Since $ a_0 $ is not a limit point of $ A,$ $ a'\neq a_0. $
This contradicts the fact that $ A $ is uniquely remotal.

Now consider, $ \|(1-t)a_0 + t x_0 - a\| $  where $ a\in A\setminus\{a_0\} $ and $ t \in \left[0,1\right] $ 
Now,
\begin{eqnarray*}
 \|(1-t)a_0 + t x_0 - a\| & \leq & (1-t)\|a_0-a\| + t\|x_0-a\|  \\
                         & < & (1-t)  diam A + t[ \|x_0-a_0\| - \epsilon_0 ] \\
												 & = & t\|x_0-a_0\| - \epsilon_0 t + (1-t)diamA \\
												 & = & \|t x_0+ (1-t)a_0-a_0\| - [ \epsilon_0 t - (1-t) diam A]
\end{eqnarray*}
\noindent Now, consider the mapping	
\[ f: \mathbb{R} \rightarrow \mathbb{R}~~  \mbox{defined as}~~  f(t)= \epsilon_0 t - (1-t)diamA .\]
 
Clearly $ f $ is continuous and $ f(1)=\epsilon_0 > 0 $. So there exists $ \delta > 0 $ such that $ f(t)>0 $ for all $ t\in (1-\delta,1+\delta) $. So there exists at least one $ t' \in (1-\delta,1) $ such that $ f(t')>0 $ i.e. $ \epsilon_0 t'-(1-t')diamA > 0 $.
So $ \|(1-t')a_0 + t' x_0 -a\| < \|(1-t')a_0 + t' x_0 - a_0\| $ for all $ a\in A\setminus\{a_0\} $. 
So $ a_0 $ is a farthest point of $ (1-t')a_0 + t' x_0 $ in $ A $. Which contradicts the fact $ \inf C = t_0 >0 $. So $ inf C = 0 $ i.e. $ a_0 $ is a farhest point of itslf. So $ A $ is singleton. 
\end{proof}
Now we show that if $A$ is a uniquely remotal M-compact subset with $A'\neq \emptyset$ then $ A'$ is ${M}$-compact and uniquely remotal.
\begin{theorem}
Let $ X $ be a normed linear space. If $ A \subseteq X $ is uniquely remotal and ${M}$-compact then $ A'$ is either empty or $ A'$ is ${M}$-compact and uniquely remotal.
\end{theorem}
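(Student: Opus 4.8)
The plan is to split on whether $A$ is a singleton. If $A$ is a singleton then it has no limit points, so $A'=\emptyset$ and there is nothing to prove. Assume henceforth that $A$ is not a singleton. Then Theorem~\ref{theorem:singleton}, in contrapositive form, forces $Far(A)\subseteq A'$, and this single inclusion drives everything that follows. It then remains to establish three things: that $A'$ is remotal, that $A'$ is M-compact, and that the farthest point in $A'$ is unique.

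Remotality of $A'$ is immediate. Fix $x\in X$ and let $a_x$ be the unique farthest point of $x$ in $A$. Since $a_x\in Far(A)\subseteq A'$ and $A'\subseteq\overline{A}$, I get $\|x-a_x\|=D(x,A)=D(x,\overline{A})\ge D(x,A')\ge\|x-a_x\|$, so $D(x,A')=D(x,A)$ and $a_x\in F(x,A')$. M-compactness of $A'$ then follows by an approximation argument: given a maximizing sequence $\{p_n\}\subseteq A'$ for some $x$, pick $a_n\in A$ with $\|a_n-p_n\|<1/n$; since $D(x,A')=D(x,A)$, the sequence $\{a_n\}$ is maximizing for $x$ in $A$, so M-compactness of $A$ yields a subsequence $a_{n_k}\to q$, whence $\|p_{n_k}-q\|\le\|p_{n_k}-a_{n_k}\|+\|a_{n_k}-q\|\to 0$ gives $p_{n_k}\to q$. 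Thus every maximizing sequence in $A'$ is relatively compact.

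The heart of the matter, and the step I expect to be the main obstacle, is uniqueness of the farthest point in $A'$. Suppose $b\in F(x,A')$ with $b\ne a_x$. If $b\in A$ then $b\in F(x,A)$ and unique remotality of $A$ gives $b=a_x$, a contradiction; so the genuine difficulty is the case $b\in\overline{A}\setminus A\subseteq A'$, where $b$ is the limit of a maximizing sequence $\{b_n\}\subseteq A$. My strategy is to imitate the ray-perturbation of Theorem~\ref{theorem:singleton}: moving $x$ outward along the ray through $b$, namely to $y_t=x+t(x-b)$, one checks $D(y_t,A)=(1+t)D(x,A)$ and, using that $A$ is uniquely remotal, that the unique farthest point of $y_t$ is still $a_x$; forcing equality throughout the triangle inequality then shows that the entire segment $[a_x,b]$ lies on the sphere $\{z:\|x-z\|=D(x,A)\}$.

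The obstacle is that this flat segment does not, by itself, yield a contradiction, because both $a_x\in A$ and $b\in\overline{A}\setminus A$ can remain tied as farthest points. The plan to close the gap is to lean harder on M-compactness: the goal is to show that no maximizing sequence of $A$ can converge to a point of $\overline{A}\setminus A$ distinct from $a_x$, equivalently that $\overline{A}$ is uniquely remotal at $x$, after which $F(x,A')=A'\cap F(x,\overline{A})$ collapses to $F(x,A)=\{a_x\}$. Concretely, I would try to extract from the perturbation a genuine \emph{second} farthest point lying in $A$ itself, rather than merely in $\overline{A}$, so as to contradict the unique remotality of $A$; producing such a point from the flat face is the crux on which the whole argument turns.
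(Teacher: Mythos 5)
Your first three steps are correct and essentially match the paper: the reduction via Theorem~\ref{theorem:singleton} to the case $Far(A)\subseteq A'$, the identity $D(x,A')=D(x,A)$ giving remotality of $A'$ (your derivation of $D(x,A')\le D(x,A)$ via $A'\subseteq\overline{A}$ is in fact slightly cleaner than the paper's $1/n$-approximation), and the approximation argument for M-compactness of $A'$. One small point there: you should conclude that the limit $q$ of $p_{n_k}$ lies in $A'$ (which it does, since $A'$ is closed), because M-compactness in this paper is used in the Panda--Kapoor sense, requiring subsequential limits of maximizing sequences to lie in the set itself, not merely to exist in $X$.

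That same convention is precisely what is missing in your uniqueness step, which you leave as an admitted gap (``I would try to extract \dots the crux on which the whole argument turns''). No ray perturbation or flat-face analysis is needed. Suppose $b\in F(x_0,A')$ with $b\ne a_{x_0}$. Since $b\in A'$, there is a sequence $\{b_n\}\subseteq A$ with $b_n\to b$; since $D(x_0,A')=D(x_0,A)$, we have $\|x_0-b_n\|\to\|x_0-b\|=D(x_0,A)$, so $\{b_n\}$ is a maximizing sequence in $A$. M-compactness of $A$ then provides a subsequence converging to a point \emph{of} $A$; but every subsequence of $\{b_n\}$ converges to $b$, so $b\in A$. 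Hence $b\in F(x_0,A)$ with $b\ne a_{x_0}$, contradicting unique remotality of $A$. This is the paper's argument, and it shows that the case you single out as the ``genuine difficulty'' ($b\in\overline{A}\setminus A$) simply cannot occur. Your obstacle arises because you read M-compactness as mere relative compactness of maximizing sequences; under that weaker reading your flat-segment impasse is real and your outline does not close, but under the definition the paper actually uses (and needs in its other proofs as well), the uniqueness step collapses to the three lines above.
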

\begin{proof} If  $ Far{A} \not\subseteq A'\ $, then $ A $ is singleton by Theorem~\ref{theorem:singleton}.  Suppose $ Far{A} \subseteq A' $. We claim that $ D(x,A)=D(x,A') $. 
                     
										First we show that  $ D(x,A) \geq D(x,A') $ for all $ x \in X $.
For  $ x\in X $ there exists $ \{a'_n\}	\subseteq A' $ such that $ \|x-a'_n \| \rightarrow D(x,A') $ as $ n \rightarrow \infty $. As $ a'_n \in A'$, so for each $ n \in \mathbb{N} $	there exists $ a_n \in A $ such that
 $ \|a_n - a'_n\| < 1/n $.	Now, $\|x-a_n\| = \|x-a'_n-(a_n-a'_n)\| \geq \|x-a'_n\|-\|a_n-a'_n\| >\|x-a'_n\|-1/n $. \\
So, $ \lim \limits_{n \to \infty} \|x-a_n\| \geq D(x,A')	$. So, $ D(x,A) \geq \lim \limits_{n \to \infty} \|x-a_n\| \geq D(x,A') $ for all $ x \in X	$. 
	
We next show that  $ D(x,A') \geq D(x,A) $ for all $ x \in X $. Since $ A $ is uniquely remotal,  for each $ x \in X $ there exists $ a_x \in A $ such that $ \|x-a_x\|=D(x,A) $.  Then $ a_x \in A'$, since $ FarA \subseteq A'$. 
So, $ D(x,A') \geq \|x-a_x\| = D(x,A) \geq D(x,A') $. So, $ D(x,A')=D(x,A)	$.
 
We next prove that $ A'$ is M-compact. Let $ \{x_n\} \subseteq A'$ be a maximizing sequence in $ A'$, so there exists $ x \in X $ such that $ \lim \limits_{n \to \infty} \|x-x_n\| = D(x,A') = D(x,A) $. For each $ x_n \in A'$  there exists $ a_n \in A $ such that $ \|x_n - a_n\| < 1/n $ for all $ n \in \mathbb{N} $. 
Now, $ \|x-a_n\| = \|x-x_n+x_n-a_n\| \leq \|x-x_n\| + \|x_n-a_n\| < \|x-x_n\| + 1/n $. So, $ \lim \limits_{n \to \infty} \|x-a_n\| \leq \lim \limits_{n \to \infty} \|x-x_n\| $. 
Again, $ \|x-x_n\| = \|x-a_n+a_n-x_n\| \leq \|x-a_n\| + \|x_n-a_n\| < \|x-a_n\| + 1/n $. so, $ \lim \limits_{n \to \infty} \|x-x_n\| \leq \lim \limits_{n \to \infty} \|x-a_n\| $. 
So, $ \lim \limits_{n \to \infty} \|x-a_n\| = \lim \limits_{n \to \infty} \|x-x_n\| = D(x,A') = D(x,A) $. 
Thus $\{a_n\} $ is maximizing in $ A $ for $ x \in X $. As $ A $ is M-compact, so $ \{a_n\} $ has a convergent subsequence $ \{a_{n_k}\} $ converging to $a_0$ (say) in $A.$  Since $ \|x_n - a_n\| < 1/n $ for all $ n \in \mathbb{N} $, the subsequence $ \{ x_{n_k} \}$ of sequence $\{x_n\} $  converges to $a_0$ in $A \subset A'$. Thus every maximizing sequence in $ A'$ has a convergent subsequence and so $A'$ is M-compact.

Finally we prove that $ A'$ is uniquely remotal. As $ D(x,A) = D(x,A') $ and $ A $ is uniquely remotal, so $ A'$ is remotal. Suppose that $ A'$ is not uniquely remotal, then there exists $ x_0 \in X $ such that $ F(x_0,A') $ is not singleton. Let $ a_1, a_2 \in A'$ such that $ a_1, a_2 \in F(x_0,A') $. As $ a_1, a_2 \in A'$, so there exists $ \{x_n\} \subseteq A $ and $ \{y_n\} \subseteq A $ such that $ x_n \rightarrow a_1 $ and $ y_n \rightarrow a_2 $, as $ n \rightarrow \infty $. So, $ \lim \limits_{n \to \infty} \|x_0-x_n\| = \|x_0-a_1\| = \|x_0-a_2\| = \lim \limits_{n \to \infty} \|x_0-y_n\| = D(x_0,A) = D(x_0,A') $. So, $ \{x_n\} $ and $ \{y_n\} $ are maximizing sequence for $ x_0 $ in $ A $. Since$ A $ is M-compact $ \{x_n\} $ and $ \{y_n\} $ have convergent subsequences in $A$ which implies that $ a_1, a_2 \in A $ - this contradicts the fact that $ A $ is uniquely remotal. Thus $ A'$ is uniquely remotal. 
 
\end{proof}

Finally we show that if $A$ is a uniquely remotal M-compact set and $A'$ is compact then $A$ is singleton.

\begin{theorem}
Let $ X $ be a Banach space. If $ A \subseteq X $ is uniquely remotal and ${M}$-compact and $ A' $ is compact then $ A $ is singleton. 
\end{theorem}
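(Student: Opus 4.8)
The plan is to split on whether the set of farthest points $Far{A}$ is contained in the derived set $A'$. If $Far{A}\not\subseteq A'$, then the hypotheses in force (uniquely remotal, M-compact) together with $Far{A}\not\subseteq A'$ are exactly those of Theorem~\ref{theorem:singleton}, which immediately yields that $A$ is a singleton. So the only work is in the complementary branch $Far{A}\subseteq A'$.

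In that branch I would first note that since $A$ is remotal, $F(x,A)\neq\emptyset$, so $Far{A}\neq\emptyset$, and hence $A'\neq\emptyset$. This lets me invoke the previous theorem, which guarantees that $A'$ is M-compact and uniquely remotal. Now I bring in the two hypotheses not yet used: $A'$ is compact and $X$ is a Banach space. Since a compact uniquely remotal subset of a Banach space is a singleton by Klee's theorem \cite{K}, applying this to $A'$ gives $A'=\{p\}$ for a single point $p$.

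The finish is then short. For every $x\in X$ the unique farthest point $F(x,A)$ lies in $Far{A}$ by the very definition of $Far{A}$, and $Far{A}\subseteq A'=\{p\}$, so $F(x,A)=\{p\}$ for all $x$; equivalently $D(x,A)=\|x-p\|$ for every $x\in X$. Evaluating at the admissible choice $x=p$ gives $D(p,A)=\|p-p\|=0$, that is $\sup_{a\in A}\|p-a\|=0$, which forces $a=p$ for every $a\in A$. Hence $A=\{p\}$ is a singleton.

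I expect the main obstacle to be conceptual rather than computational: the crux is the reduction step, namely recognizing that compactness of $A'$ combined with the unique remotality of $A'$ (supplied by the previous theorem) collapses $A'$ to a single point through Klee's result. Once $A'$ is a single point, the unique remotality of $A$ pins every farthest point to that point, and the substitution $x=p$ closes the argument. The only bookkeeping requiring care is the dichotomy on $Far{A}$ and verifying $A'\neq\emptyset$ before applying the previous theorem in the branch $Far{A}\subseteq A'$.
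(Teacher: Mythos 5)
Your proof is correct, and it takes a genuinely more economical route through the second branch than the paper does. Both arguments share the same skeleton: the dichotomy on whether $Far{A}\subseteq A'$, with Theorem~\ref{theorem:singleton} disposing of the case $Far{A}\not\subseteq A'$, and Klee's theorem delivering the final blow in the remaining case. The difference is the object to which Klee is applied. You check $A'\neq\emptyset$ via $\emptyset\neq Far{A}\subseteq A'$, invoke the immediately preceding theorem (uniquely remotal and M-compact implies $A'$ is empty or else uniquely remotal and M-compact), and then apply Klee directly to the compact uniquely remotal set $A'$. The paper instead proves two fresh lemmas inside the proof --- Lemma~\ref{lemma:M-compact}, that $Far{A}$ is M-compact, and Lemma~\ref{lemma:remotal}, that the closure of a uniquely remotal M-compact set is uniquely remotal in any normed space --- and applies Klee to $\overline{Far{A}}$, which is compact because it is a closed subset of the compact set $A'$. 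Your route needs no new lemmas and reuses the structure already built; the paper's route buys two statements of independent interest, notably Lemma~\ref{lemma:remotal}, which gives a partial answer (for M-compact sets, in arbitrary normed spaces) to the open question of whether the closure of a uniquely remotal set is uniquely remotal. A further point in your favour: you make explicit the closing step --- $F(x,A)\subseteq Far{A}\subseteq A'=\{p\}$ for every $x$, so $D(p,A)=0$ and hence $A=\{p\}$ --- which the paper compresses into the unexplained sentence ``Then $A$ is singleton,'' and your version also avoids the paper's slip of writing ``$Far{A}$ is compact'' where $\overline{Far{A}}$ is meant. (In both your argument and the paper's, the second branch in fact turns out to be vacuous, since $A$ singleton forces $A'=\emptyset$; this does no harm, as the desired conclusion is still validly derived within that branch.)
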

\begin{proof}  To prove this theorem we need the following two lemmas: 
\begin{lemma}\label{lemma:M-compact}
Let $ X $ be a normed linear space. If $ A \subseteq X $ is uniquely remotal and M-compact then $ Far{A} = \{ a \in A : ~~ a $ is a farthest point for some $ x \in X \} $ is M-compact. 
\end{lemma}
\begin{proof} Let $ \{a_n\} $ be a maximizing sequence in $ Far{A} $. So there exists $ x \in X $ such that $ \lim \limits_{n \to \infty} \|x-a_n\|=D(x, Far{A})=D(x,A) $.
So $ \{a_n\} $ is maximizing sequence for $ x $ in $ A $. As $ A $ is M-compact, so $ \{a_n\} $ has a convergent 
subsequence $ \{a_{n_k}\} $. As $ A $ is uniquely remotal, so $ a_{n_k} \rightarrow a_x \in A $ (say).  
Now, $ \lim \limits_{k \to \infty} \|x-a_{n_k}\|=\|x-a_x\|= D(x,A)=D(x,Far{A}) $, so $ a_x \in FarA $. So $\{a_n\}$ has convergent sub sequence in $ Far{A} $.  Thus $Far{A}$ is M-compact.
\end{proof}
\begin{lemma}\label{lemma:remotal}
Let $ X $ be a normed linear space. If $ A\subseteq X $ is uniquely remotal and M-compact then $ \overline{A} $ is uniquely remotal. 
\end{lemma}
\begin{proof} As $ D(x,A) = D(x, \overline{A}) ~~ \forall x \in X $ and $ A $ is uniquely remotal so $ \overline{A} $ is remotal. Suppose $ \overline{A} $ is not uniquely remotal, then   there exists $ x_0 \in X $ such that $ F(x_0,\overline{A}) $ is not singleton.  Let $ a_1, a_2 \in \overline{A} $ such that $ a_1, a_2 \in F(x_0, \overline{A}). $ Since $ a_1, a_2 \in \overline{A} $ then there exists two sequences $ \{x_n\} \subseteq A ~~ and ~~ \{y_n\} \subseteq A $ such that $ x_n\rightarrow a_1 ~~ and ~~ y_n\rightarrow a_2 $. Clearly $ \{x_n\} ~~ and ~~ \{y_n\} $ are maximizing sequences for $ x_0 $ in $ A $, so $ \{x_n\} ~~ and ~~ \{y_n\} $ has convergent subsequences (as $ A $ is M-compact) which implies that $ a_1, a_2 \in A $ and this contradicts the fact that $ A $ is uniquely remotal. Thus $ \overline{A} $ is uniquely remotal. 
\end{proof}

\noindent\textbf{Proof of the theorem cont.} If $ Far{A} \not\subseteq A'\ $, then $ A $ is singleton by Theorem~\ref{theorem:singleton}.  Suppose $ Far{A} \subseteq A' $. As $ A' $ is closed then $ \overline {Far{A}} \subseteq A'. $ Since $ A' $ is compact, $ \overline{Far{A}} $ is also compact. Also, $ A $ is uniquely remotal and M-compact. So by Lemma~\ref{lemma:M-compact}, $ Far{A} $ is M-compact. Also from definition $ Far{A} $ is uniquely remotal. Now by Lemma~\ref{lemma:remotal}, $ \overline{Far{A}} $ is uniquely remotal. As $ Far{A} $ is compact and uniquely remotal so $ Far{A} $ is singleton by Klee's result in \cite{K}. Then $ A $ is singleton.

\end{proof}

\bibliographystyle{amsplain}

\end{document}